\newtheorem{theorem}{Theorem}
\newtheorem{prop}[theorem]{Proposition}
\newtheorem{thm}[theorem]{Theorem}
\newtheorem{lem}[theorem]{Lemma}
\DeclareMathOperator{\card}{card}
\def\\{\cr}
\def\({\left(}
\def\){\right)}
\def\[{\left[}
\def\]{\right]}
\def\<{\langle}
\def\>{\rangle}
\def\N{\mathbb{N}}
\def\C{\mathbb{C}}
\def\notdivides{\mathrel{\kern-3pt\not\!\kern3.5pt\bigm|}}
\begin{document}


\title{Counting tuples restricted by coprimality conditions}
\author{%
{\sc Juan Arias de Reyna}\\
{Department of Mathematical Analysis, Seville University} \\
{Seville, Spain}\\
{\tt arias@us.es}
\and
{\sc Randell Heyman}  \\
School of Mathematics and Statistics,\\ University of New South Wales \\
Sydney, Australia\\
{\tt randell@unsw.edu.au}
}

\maketitle

\begin{abstract}
Given a set $A=\{(i_1,j_1),\ldots,(i_m,j_m)\}$ we say that $(a_1,\ldots,a_v)$ exhibits \emph{pairwise coprimality} if $\gcd(a_i,a_j) = 1$ for all $(i,j)\in A$.
For a given positive $x$ we give an asymptotic formula for the number of $(a_1,\ldots,a_v)$ with $1 \le a_1,\ldots,a_v \le x$ that exhibit pairwise coprimality. Our error term is better than that of Hu.
\end{abstract}

\noindent
\section{Introduction}
We study tuples whose elements are positive integers of maximum value $x$ and impose certain coprimality conditions on pairs of elements.
T$\acute{\textrm{o}}$th \cite{Tot} used an inductive approach to give an asymptotic formula for the number of height constrained tuples that exhibit pairwise coprimality. For a generalisation from pairwise coprimality to $v$-wise coprimality see \cite{Hu1}.

Recently Fern$\acute{\rm{a}}$ndez and Fern$\acute{\rm{a}}$ndez, in \cite{Fer} and in subsequent discussions with the second author, have shown how to calculate the probability that $v$ positive integers of any size exhibit coprimality across given pairs.  Their approach is non-inductive.
Hu \cite{Hu2} has estimated the number of $(a_1,\ldots,a_v)$ with $1 \le a_1,\ldots,a_v \le x$  that satisfy given coprimality conditions on pairs of elements of the $v$-tuple. His inductive approach gives an asymptotic formula with an upper bound on the error term of $O(x^{v-1}\log^{v-1}x)$.

Coprimality across given pairs of elements of a $v$-tuple is not only interesting in its own right. To date it has been necessary for quantifying $v$-tuples that are \emph{totally pairwise non-coprime}, that is, $\gcd(i,j)>1$ for all $1 \le i,j \le v$ (see \cite{Hu2},\cite{Hey} and \cite{Mor} and its comments regarding \cite{Fre}).

Our main result gives a better error term than that of \cite{Hu2}. Unlike \cite{Hu2} our approach is non-inductive.

We use a graph to represent the required primality conditions as follows. Let $G=(V,E)$ be a graph with $v$ vertices and $e$ edges. The set of vertices, $V$, will be given by $V=\{1,\ldots,v\}$ whilst the set of edges of $G$, denoted by $E$, is a subset of the set of pairs of elements of $V$. That is, $E \subset \{\{1,2\},\{1,3\},\ldots,\{r,s\},\ldots,\{v-1,v\}\}$. We admit isolated vertices (that is, vertices that are not adjacent to any other vertex). An edge is always of the form $\{r,s\}$ with $r \ne s$ and $\{r,s\}=\{s,r\}$.
For each real $x>0$ we define the set of all tuples that satisfy the primality conditions by
$$G(x):=\{(a_1,\ldots,a_v) \in \mathbb{N}^v: a_r \le x,~\gcd(a_r,a_s)=1~\text{if}~\{r,s\}\in E\}.$$
We also let $g(x)=\card (G(x))$, and denote with $d$ the maximum degree of the vertices of $G$. Finally, let $Q_G(x)=1+a_2x^2+\cdots + a_vx^v$ be the polynomial associated to the graph $G$ defined in Section \ref{Preparations}.

Our main result is as follows.
\begin{thm}\label{main}

For real $x>0$ we have
$$g(x)=x^v \rho_G+O(x^{v-1}\log^d x),$$
where
$$\rho_G=\prod_{p~\textrm{prime}}Q_G\(\frac{1}{p}\).$$
\end{thm}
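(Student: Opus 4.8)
The strategy is to resolve the coprimality conditions by M\"obius inversion, to read off the main term as a convergent Euler product, and to control the error by exploiting the least-common-multiple structure of the divisibility conditions produced. Detecting each condition $\gcd(a_r,a_s)=1$ (for $\{r,s\}\in E$) by $\sum_{d_{rs}\mid\gcd(a_r,a_s)}\mu(d_{rs})$ and interchanging summations gives
$$g(x)=\sum_{\mathbf d}\Bigl(\prod_{\{r,s\}\in E}\mu(d_{rs})\Bigr)\prod_{i=1}^{v}\fl{\frac{x}{\ell_i}},\qquad \ell_i:=\lcm\bigl(d_{rs}:\ \{r,s\}\in E,\ i\in\{r,s\}\bigr),$$
the outer sum running over all tuples $\mathbf d=(d_{rs})_{\{r,s\}\in E}$ of squarefree positive integers; since $\ell_i$ is the modulus forced on the $i$th coordinate, only tuples with every $\ell_i\le x$, hence every $d_{rs}\le x$, contribute.

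For the main term I would replace each $\fl{x/\ell_i}$ by $x/\ell_i$ and extend the sum to all squarefree $\mathbf d$, obtaining $x^v\sum_{\mathbf d}\bigl(\prod_{\{r,s\}\in E}\mu(d_{rs})\bigr)\big/\prod_{i}\ell_i$. This sum converges absolutely and factors over primes: at a prime $p$, the set $S\subseteq E$ of edges with $p\mid d_{rs}$ forces $p\mid\ell_i$ exactly for the vertices $i$ covered by $S$ and contributes the sign $(-1)^{|S|}$, so the local factor is $\sum_{S\subseteq E}(-1)^{|S|}p^{-|V(S)|}$, where $V(S)$ is the set of endpoints of $S$. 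Grouping these terms by the value of $|V(S)|$ and comparing with the definition of $Q_G$ in Section~\ref{Preparations} identifies the local factor with $Q_G(1/p)$; since a non-empty edge set covers at least two vertices, $Q_G(1/p)=1+O(p^{-2})$, the product converges, and the main term equals $x^v\rho_G$.

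It then remains to bound two errors by $O(x^{v-1}\log^{d}x)$. Truncating the $\mathbf d$-sum costs $x^v\sum_{\mathbf d:\ \exists d_{rs}>x}\bigl(\prod|\mu(d_{rs})|\bigr)\big/\prod_i\ell_i$; grouped by the offending edge and treated prime-adically, this is a tail of the convergent Euler product in which one factor exceeds $x$ in size, and, after separating off the finitely many small primes and using $\sum_{n>x}C^{\omega(n)}n^{-2}\ll_C(\log x)^{C-1}/x$ with $C$ arbitrarily close to $1$, it is $\ll_{\varepsilon}x^{v-1}(\log x)^{\varepsilon}$, comfortably within target. Replacing the floors costs, via the elementary bound $\bigl|\prod_i\fl{x/\ell_i}-\prod_i x/\ell_i\bigr|\le x^{v-1}\sum_{k=1}^{v}\prod_{j\ne k}\ell_j^{-1}$ (valid since $0\le\fl{x/\ell_i}\le x/\ell_i$), a quantity that reduces to the key estimate
$$\sum_{\mathbf d:\ d_{rs}\le x}\Bigl(\prod_{\{r,s\}\in E}|\mu(d_{rs})|\Bigr)\prod_{j\ne k}\frac{1}{\ell_j}\ \ll_{G}\ (\log x)^{\deg(k)}\ \le\ (\log x)^{d},$$
which, summed over $k$, gives the claimed bound.

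The key estimate is the main obstacle. I would prove it by summing over the moduli $d_{kt}$ on the $\deg(k)$ edges incident to $k$ last: such a modulus divides only $\ell_t$ among the l.c.m.'s $\ell_j$ with $j\ne k$, and writing $\ell_t=\lcm(d_{kt},m_t)=d_{kt}m_t/\gcd(d_{kt},m_t)$ (with $m_t$ the l.c.m.\ of the moduli on the other edges at $t$), the inner sum $\sum_{d_{kt}\le x}\gcd(d_{kt},m_t)/d_{kt}\ll\tau(m_t)\log x$ supplies one factor $\log x$; every remaining modulus divides two of the surviving l.c.m.'s and hence contributes a convergent sum, the divisor factors $\tau(m_t)\ll_{\varepsilon}m_t^{\varepsilon}$ being harmlessly absorbed. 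Care is needed precisely because the $\ell_j$ are least common multiples rather than products, so these sums do not literally separate; pushing the identity $\lcm(a,b)=ab/\gcd(a,b)$ through while keeping the resulting $\gcd$- and $\tau$-type factors under control, and checking that the power of $\log x$ produced is exactly the maximal degree $d$ rather than something larger, is the technical heart of the argument. One then assembles $x^v\rho_G$ with the two error bounds.
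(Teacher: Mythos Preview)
Your outline is the paper's argument: the same M\"obius decomposition of $g(x)$, the same Euler-product identification of the main term with $\rho_G=\prod_pQ_G(1/p)$, and the same split of the error into a floor-replacement part and a tail part. The differences are in execution, and in one place the paper's device is worth knowing.

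For the floor error you sum over the $\deg(k)$ moduli incident to $k$ one at a time, each producing a factor $\tau(m_t)\log x$, and then argue that the leftover divisor factors can be absorbed into a convergent sum. This works, but the paper bypasses the whole ``technical heart'' you describe: it rewrites the sum as $\sum_{P^+(m)\le x}C_{G,k}(m)/m$, where $C_{G,k}(m)$ counts squarefree edge-numberings with $\prod_{j\ne k}\ell_j=m$, observes that $C_{G,k}$ is multiplicative with $C_{G,k}(p^\alpha)=0$ for $\alpha>v$, and computes $C_{G,k}(p)=\deg(k)$ directly (the solutions are exactly those with all $d_e=1$ except a single $d_e=p$ on an edge through $k$). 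Then $\prod_{p\le x}(1+\deg(k)/p+O(p^{-2}))=O((\log x)^{\deg(k)})$ in one stroke, with no $\tau$-factors to clean up and no worry that the exponent might overshoot.

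Your tail argument, by contrast, is too sketchy as written. The condition $d_{rs}>x$ is not multiplicative, so the tail is not literally ``a tail of the convergent Euler product'', and the crude bound $1/\ell_r\le 1/d_{rs}$, $1/\ell_s\le 1/d_{rs}$ discards the other moduli at $r$ and $s$ from the denominator, after which edges with one endpoint in $\{r,s\}$ contribute divergent sums. The paper handles this by factoring each remaining $n_j$ as $d_jn'_j$ with $d_j\mid n_1$ and $\gcd(n'_j,n_1)=1$; the $n'$-sum is then a convergent sum over the graph with the offending edge deleted (Lemma~\ref{absolute}), and the $d$-sum is a multiplicative function of $n_1$ bounded by $(\sigma(n_1)/n_1)^{w}$, giving the much stronger tail bound $O\bigl(x^{v-2}(\log\log x)^{w}\bigr)$.
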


\section{Preparations}\label{Preparations}
As usual, for any integer $n\ge 1,$ let $\omega(n)$ and $\sigma(n)$ be the number of distinct prime factors of $n$ and the sum of divisors of $n$
respectively (we also set $\omega(1) =0$). We also use $\mu$ to denote the M{\" o}bius function, that is,
$\mu(n)=(-1)^{\omega(n)}$ if $n$ is square free, and $\mu(n)=0$ otherwise.
$P^+(n)$ denotes the largest prime factor of the integer $n>1$. By convention $P^+(1)=1$.
We recall that the notation $U = O(V)$  is
equivalent to the assertion that the inequality $|U|\le c|V|$ holds for some
constant $c>0$. We will denote the least common multiple of integers $x_1,\ldots,x_v$ by $[x_1,\ldots,x_v]$.

For each $F \subset E$, a subset of the edges of $G$,  let $v(F)$ be the number of non-isolated vertices of $F$. We define two polynomials $Q_G(x)$ and $Q_G^+(x)$   by
$$Q_G(x)=\sum_{F \subset E} (-1)^{\card(F)}x^{v(F)},\qquad
Q_G^+(x)=\sum_{F \subset E} x^{v(F)}.$$
In this way we associate two polynomials to each graph. It is clear that the only $F\subset E$ for which $v(F)=0$ is the empty set. Thus the constant  term of $Q_G(x)$ and $Q_G^+(x)$
is always 1. If $F$ is non-empty then there is some edge $a=\{r,s\} \in F$ so that $v(F) \ge 2$. Therefore the coefficient of $x$ in $Q_G(x)$ and $Q_G^+(x)$  is zero.
Since we do not allow repeated edges the only case in which $v(F)=2$ is when $F$ consists of one edge. Thus the coefficient of $x^2$ in $Q_G^+(x)$ is $e$, that is, the number of edges $e$ in $G$. The
corresponding $x^2$ coefficient in $Q_G(x)$ is $-e$.

As a matter of notation we shall sometimes use $r$ and $s$ to indicate vertices. The letter $v$ will always denote the last vertex and the number of vertices in a given graph. Edges will sometimes be denoted by $a$ or $b$. As previously mentioned, we use $d$ to denote the maximum degree of any vertex and $e$ to denote the number of edges.  We use terms like $e_j$ to indicate the $j$-th edge.

We associate several multiplicative functions to any graph. To define these functions we consider  functions  $E \rightarrow \mathbb{N}$,
that is, to any edge $a$ in the graph we associate a natural number $n_a$.
We call any of these functions, $a\mapsto n_a$, an
\emph{edge numbering} of the graph. Given an edge numbering we assign a corresponding
\emph{vertex numbering} function $r\mapsto N_r$ by the rule  $N_r=[n_{b_1},\ldots,n_{b_u}]$,
where $E_r=\{b_1,\ldots,b_u\}\subset E$ is the set of edges incident to $r$.
We note that in the case where $r$ is an isolated vertex we will have
$E_r=\emptyset$ and $N_r=1$. With these notations we define
$$f_G(m)=\sum_{N_1N_2\cdots N_v=m} \mu(n_1)\cdots\mu(n_e),\quad
f^+_G(m)=\sum_{N_1N_2\cdots N_v=m} |\mu(n_1)\cdots\mu(n_e)|,$$
where the sums extend to all possible edge numberings of $G$.

The following is interesting in its own right but will also be used to prove Theorem \ref{main}.
\begin{prop}\label{multi}
Let $f:\N \rightarrow \C$ be a multiplicative function. For any graph $G$ the function
$$g_{f,G}(m)=\sum_{N_1N_2\cdots N_v=m} f(n_1)\cdots f(n_e)$$
is multiplicative.
\end{prop}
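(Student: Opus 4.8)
The plan is to verify multiplicativity straight from the definition, by exhibiting, for coprime positive integers $m$ and $m'$, a bijection between the edge numberings of $G$ with $N_1N_2\cdots N_v = mm'$ and the pairs of edge numberings with $N_1\cdots N_v = m$ and $N_1\cdots N_v = m'$, under which the summand $f(n_1)\cdots f(n_e)$ factors as the product of the two corresponding summands. Together with $g_{f,G}(1) = f(1)^e = 1$ (the all-ones numbering being the only edge numbering with $N_1\cdots N_v = 1$, since $N_r = 1$ forces $n_a=1$ for every edge $a$ incident to $r$), this yields $g_{f,G}(mm') = g_{f,G}(m)\,g_{f,G}(m')$.

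The one elementary fact I would isolate first is that the least common multiple distributes over coprime decompositions: if each $n_a$ factors as $n_a = n'_a n''_a$ with $\gcd\bigl(\prod_{a\in E} n'_a,\ \prod_{a\in E} n''_a\bigr) = 1$, then comparing $p$-adic valuations gives $N_r = [n'_{b_1},\ldots,n'_{b_u}]\cdot[n''_{b_1},\ldots,n''_{b_u}] =: N'_r N''_r$ for every vertex $r$, where $E_r = \{b_1,\ldots,b_u\}$ is the set of edges incident to $r$; hence $N_1\cdots N_v = (N'_1\cdots N'_v)(N''_1\cdots N''_v)$, a factorisation into coprime parts.

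Now fix coprime $m,m'$ and let $S,S'$ be the sets of primes dividing $m$ and $m'$. If $N_1\cdots N_v = mm'$, then every $n_a$ is supported on $S\cup S'$ — a prime dividing $n_a$ divides $N_r$ for each of the two endpoints $r$ of $a$, hence divides $mm'$ — so $n_a = n'_a n''_a$ with $n'_a$ the $S$-part and $n''_a$ the $S'$-part, and these are coprime. By the previous paragraph $\prod_r N'_r = m$ and $\prod_r N''_r = m'$, while $f(n_a) = f(n'_a)f(n''_a)$ by multiplicativity of $f$. Conversely, given edge numberings with $\prod_r N'_r = m$ and $\prod_r N''_r = m'$, the same support argument forces the $n'_a$ to be supported on $S$ and the $n''_a$ on $S'$, hence coprime, so $n_a := n'_a n''_a$ is an edge numbering with $\prod_r N_r = mm'$. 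These two maps are mutually inverse and multiply the summands as required, completing the proof. The only genuinely substantive point is the distributivity of $[\,\cdot\,]$ over coprime parts combined with the observation that the $n_a$ acquire no prime factor outside $S\cup S'$; everything else is bookkeeping. Note that complete multiplicativity of $f$ is not needed, since the arguments $n'_a,n''_a$ appearing in each summand are automatically coprime.
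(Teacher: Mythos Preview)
Your proof is correct and follows essentially the same approach as the paper: split each $n_a$ into its $m$-part and $m'$-part, use that the lcm distributes over such coprime decompositions to split each $N_r$, and invoke multiplicativity of $f$ on the coprime factors. You are in fact a bit more careful than the paper, explicitly verifying both directions of the bijection and checking $g_{f,G}(1)=1$.
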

\begin{proof}
Let $m=m_1m_2$ where $\gcd(m_1,m_2)=1$. Let us assume that for a given edge numbering of $G$ we have $N_1\cdots N_v=m$. For any edge $a=\{r,s\}$ we have $n_a|N_r$ and $n_a|N_s$. Therefore $n_a^2|m$. It follows that we may express $n_a$ as $n_a=n_{1,a}n_{2,a}$ with $n_{1,a}|m_1$ and $n_{2,a}|m_2$. In this case $\gcd(n_{1,a},n_{2,a})=1$, and we will have
$$N_r=[n_{b_1},\ldots,n_{b_v}]=[n_{1,b_1},\ldots,n_{1,b_v}][n_{2,b_1},\ldots,n_{2,b_v}],$$
$$f(n_1)\cdots f(n_e)=f(n_{1,1})\cdots f(n_{1,e})\cdot f(n_{2,1})\cdots f(n_{2,e}).$$
Since each edge numbering $n_a$ splits into two edge numberings $n_{1,a}$ and $n_{2,a}$, we have
$$m_1=N_{1,1}\cdots N_{1,v}, \quad m_2=N_{2,1}\cdots N_{2,v}.$$
Thus
\begin{align*}
g_{f,G}(m_1m_2)&=g_{f,G}(m)\\
&=\sum_{N_1N_2\cdots N_v=m} f(n_1)\cdots f(n_e)\\
&=\sum_{N_{1,1}\cdots N_{1,v}\cdot N_{2,1}\cdots N_{2,v}=m_1m_2}f(n_{1,1})\cdots f(n_{1,e})\cdot f(n_{2,1})\cdots f(n_{2,e}) \\
&=\sum_{N_{1,1}\cdots N_{1,v}=m_1}f(n_{1,1})\cdots f(n_{1,e})\sum_{N_{2,1}\cdots N_{2,v}=m_2}f(n_{2,1})\cdots f(n_{2,e}) \\
&=g_{f,G}(m_1)g_{f,G}(m_2),
\end{align*}
which completes the proof.
\end{proof}
We now draw the link between $f^+_G(p^k)$ and $Q_G^+(x)$.
\begin{lem}\label{L:functions}
For any graph $G$ and prime $p$ the value $f^+_G(p^k)$ is equal to the coefficient of $x^k$ in $Q_G^+(x)$. In the same way the value of $f_G(p^k)$ is equal to the coefficient of $x^k$
in $Q_G(x)$.
\end{lem}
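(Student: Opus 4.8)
Since $f^+_G$ and $f_G$ are both instances of the function $g_{f,G}$ from Proposition~\ref{multi} (take $f = |\mu|$ and $f = \mu$ respectively, both multiplicative), it suffices to evaluate $g_{f,G}(p^k)$ directly from the definition and compare with the coefficients of $Q_G^+$ and $Q_G$. I would unwind the definition: $g_{f,G}(p^k) = \sum f(n_1)\cdots f(n_e)$, summed over edge numberings $a \mapsto n_a$ with $N_1 N_2 \cdots N_v = p^k$, where $N_r = [n_{b_1},\ldots,n_{b_u}]$ over edges incident to $r$.

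**Reduction to prime-power edge numberings.** The first key step is to observe that if $N_1\cdots N_v = p^k$, then each $N_r$ is a power of $p$, hence each $n_a$ (dividing $N_r$ for any incident $r$) is a power of $p$; say $n_a = p^{e_a}$ with $e_a \ge 0$. Then $N_r = p^{\max_{b \in E_r} e_b}$ (with the max over the empty set equal to $0$, consistent with $N_r = 1$ for isolated $r$). So the constraint $N_1\cdots N_v = p^k$ becomes $\sum_{r=1}^v \max_{b \in E_r} e_b = k$. The second key step, using $f(p^j) = 0$ for $j \ge 2$ when $f \in \{\mu, |\mu|\}$: every contributing term has all $e_a \in \{0,1\}$, i.e.\ each edge numbering corresponds to a subset $F = \{a \in E : e_a = 1\} \subset E$. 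For such a term, $N_r = p$ exactly when $r$ is incident to some edge of $F$, i.e.\ when $r$ is a non-isolated vertex of $F$; otherwise $N_r = 1$. Hence $\sum_r \max_{b\in E_r} e_b = v(F)$, and the product of $f$-values is $f(p)^{|F|}$, which equals $(-1)^{|F|}$ for $f = \mu$ and $1$ for $f = |\mu|$.

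**Matching the coefficients.** Putting this together,
\[
g_{f,G}(p^k) = \sum_{\substack{F \subset E\\ v(F) = k}} f(p)^{|F|},
\]
so $f^+_G(p^k) = \#\{F \subset E : v(F) = k\}$, which is precisely the coefficient of $x^k$ in $Q_G^+(x) = \sum_{F\subset E} x^{v(F)}$, and $f_G(p^k) = \sum_{F\subset E,\, v(F)=k} (-1)^{|F|}$, the coefficient of $x^k$ in $Q_G(x) = \sum_{F\subset E}(-1)^{\card(F)} x^{v(F)}$. This completes the proof.

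**Main obstacle.** There is no deep obstacle here; the lemma is essentially bookkeeping once one makes the right identification. The one point requiring care is the bijection between $\{0,1\}$-valued edge numberings and subsets $F \subset E$, together with the verification that $N_1\cdots N_v = p^{v(F)}$ under this correspondence — in particular handling isolated vertices and vertices incident only to edges outside $F$ correctly, so that the exponent of $p$ really is the count $v(F)$ of non-isolated vertices of $F$ and not something larger. One should also note explicitly that edge numberings assigning $e_a \ge 2$ to some edge contribute $0$ to the sum for $f \in \{\mu,|\mu|\}$, so they may be discarded; this is where the argument genuinely uses that we are dealing with $\mu$ and $|\mu|$ rather than an arbitrary multiplicative $f$.
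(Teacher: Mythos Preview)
Your proof is correct and follows essentially the same route as the paper: both arguments reduce to the bijection between contributing edge numberings (those with each $n_a\in\{1,p\}$) and subsets $F\subset E$, then verify that $N_1\cdots N_v=p^{v(F)}$ and that the product of M\"obius values is $(-1)^{\card(F)}$. Your presentation is slightly more streamlined in that you handle $f_G$ and $f_G^+$ simultaneously via $g_{f,G}$ with $f\in\{\mu,|\mu|\}$, but the underlying idea is identical.
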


\begin{proof}
First we consider the case of $f_G(p^k)$.
Recall that
$$Q_G(x)=\sum_{F\subset E}(-1)^{\card(F)}x^{v(F)},\qquad
f_G(p^k)=\sum_{N_1\cdots N_v=p^k}\mu(n_1)\cdots \mu(n_e),$$
where the last sum is on the set of edge numberings of $G$.
In the second sum we shall only consider edge numberings of $G$ giving a non null term.
This means that we only consider edge numberings with $n_a$ squarefree numbers.
Notice also that if $N_1\cdots N_v=p^k$,
then each $n_a\mid p^k$. So the second sum extends to all
edge numbering with $n_a\in \{1,p\}$ for each edge $a\in E$ and satisfying $N_1\cdots N_v=p^k$.

We need to prove the equality
\begin{equation}\sum_{F\subset E,\  v(F)=k}(-1)^{\card(F)}=\sum_{N_1\cdots N_v=p^k}\mu(n_1)\cdots \mu(n_e).
\label{E:lemma3}\end{equation}
To this end we shall define for each $F\subset E$ with $v(F)=k$ a squarefree edge numbering
$\sigma(F)=(n_a)$ with
$N_1\cdots N_v=p^k$,
$n_a\in\{1,p\}$ and such that $(-1)^{\card(F)}=\mu(n_1)\cdots \mu(n_e)$.
We will show that $\sigma$ is a bijective mapping between the set of $F\subset E$ with $v(F)=k$
and the set of edge numberings $(n_a)$ with $N_1\cdots N_v=p^k$.  Thus equality \eqref{E:lemma3}
will be established and the proof finished.

Assume that  $F\subset E$ with $v(F)=k$. We define $\sigma(F)$ as the edge numbering $(n_a)$
defined by
$$n_a=p \text{ for any $a\in F$}, \quad n_a=1\text{ for $a\in E\smallsetminus F$}.$$
In this way it
is clear that $\mu(n_1)\cdots \mu(n_e)=(-1)^{\card(F)}$. Also $N_r=p$ or $N_r=1$.
We will have $N_r=p$ if and only if there is some $a=\{r,s\}\in F$. So that
$N_1\cdots N_v=p^{v(F)}$ because by definition $v(F)$ is the cardinality of the union
$\bigcup_{\{r,s\}\in F}\{r,s\}$.

The map $\sigma$ is  invertible. For let $(n_a)$ be an edge numbering of squarefree numbers
with $N_1\cdots N_v=p^k$ and $n_a\in\{1,p\}$. If $\sigma(F)=(n_a)$ necessarily we will have
$F=\{a\in E: n_a=p\}$. It is clear that defining $F$ in this way we will have
$v(F)=k$ and  $\sigma(F)=(n_a)$.

Therefore the coefficient of $x^k$ in $Q_G(x)$ coincide with the value of $f_G(p^k)$.

The proof for $f_G^+$ is the same  observing that for $\sigma(F)=(n_a)$ we will have
$1=|(-1)^{\card(F)}|=|\mu(n_1)\cdots \mu(n_e)|$.
\end{proof}

\section{Proof of Theorem \ref{main}}
We prove the theorem in the following steps:
\begin{enumerate}
\item We show that
$$g(x)=\sum_{n_1,\ldots,n_e}\mu(n_1)\cdots\mu(n_e)\prod_{r=1}^v \bigg\lfloor\frac{x}{N_{r}}\bigg\rfloor.$$
\item
We show that
$$g(x)=x^v \sum_{n_1=1}^\infty \cdots \sum_{n_e=1}^\infty \mu(n_1)\cdots \mu(n_e)\prod_{r=1}^v \frac{1}{N_{r}}+R+O\(x^{v-1}\log^d x\),$$
where
$$|R| \le x^{v-1} \sum_{j=1}^e \sum_{n_1=1}^\infty\cdots\sum_{n_{j-1}=1}^\infty\sum_{n_j>x}\sum_{n_{j+1}=1}^\infty\cdots \sum_{n_e=1}^\infty\mu(n_1)\cdots \mu(n_e)\prod_{r=1}^v \frac{1}{N_{r}}.$$
\item We show that $|R|=O(x^{v-1}\log^d x)$.
\end{enumerate}
We start with the following sieve result which generalises the sieve of Eratosthenes.

\begin{lem}\label{sieve}
Let $X$ be a finite set, and let $A_1,A_2,\ldots,A_k \subset X$. Then
$$\card\(X\backslash \bigcup_{j=1}^k A_j\)=\sum_{J \subset \{1,2,\ldots,k\}}(-1)^{\card(J)}\card(A_J),$$
where $A_\emptyset=X$, and for $J\subset \{1,2,\ldots,k\}$ nonempty
$$A_J=\bigcap_{j \in J} A_j.$$
\end{lem}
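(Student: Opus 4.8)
The statement is the classical inclusion-exclusion principle, so the plan is a standard induction on $k$, the number of subsets. The base case $k=0$ is the convention $A_\emptyset = X$, for which the right-hand side is a single term equal to $\card(X) = \card(X \setminus \bigcup_{j=1}^0 A_j)$. For the inductive step, I would assume the formula holds for $k-1$ sets and apply it inside the set $X$ to the collection $A_1, \ldots, A_{k-1}$, and separately to the collection $A_1 \cap A_k, \ldots, A_{k-1} \cap A_k$ viewed as subsets of $A_k$.

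\begin{proof}
We argue by induction on $k$. For $k=0$ the left-hand side is $\card(X)$ and the right-hand side consists of the single term $J=\emptyset$, which by definition equals $\card(A_\emptyset)=\card(X)$; the two agree.

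Assume the result holds for any finite set and any family of at most $k-1$ subsets, and let $A_1,\ldots,A_k\subset X$. Write $Y=X\backslash\bigcup_{j=1}^{k-1}A_j$, so that
$$X\backslash\bigcup_{j=1}^{k}A_j=Y\backslash A_k=Y\backslash(Y\cap A_k),$$
and hence
$$\card\(X\backslash\bigcup_{j=1}^{k}A_j\)=\card(Y)-\card(Y\cap A_k).$$
By the inductive hypothesis applied to $A_1,\ldots,A_{k-1}\subset X$,
$$\card(Y)=\sum_{J\subset\{1,\ldots,k-1\}}(-1)^{\card(J)}\card(A_J).$$
For the second term, note that $Y\cap A_k=A_k\backslash\bigcup_{j=1}^{k-1}(A_j\cap A_k)$, so the inductive hypothesis applied to the subsets $A_1\cap A_k,\ldots,A_{k-1}\cap A_k$ of the finite set $A_k$ gives
$$\card(Y\cap A_k)=\sum_{J\subset\{1,\ldots,k-1\}}(-1)^{\card(J)}\card(A_{J\cup\{k\}}),$$
where we have used $\bigcap_{j\in J}(A_j\cap A_k)=A_{J\cup\{k\}}$ for $J$ nonempty and $A_k=A_{\{k\}}$ for $J=\emptyset$. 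Subtracting, the first sum accounts for all $J\subset\{1,\ldots,k\}$ with $k\notin J$, while the second, after incorporating the sign $-1=(-1)^{\card(J\cup\{k\})-\card(J)}$, accounts for all $J\subset\{1,\ldots,k\}$ with $k\in J$. Together these exhaust the subsets of $\{1,\ldots,k\}$, yielding
$$\card\(X\backslash\bigcup_{j=1}^{k}A_j\)=\sum_{J\subset\{1,\ldots,k\}}(-1)^{\card(J)}\card(A_J),$$
which completes the induction.
\end{proof}

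There is essentially no obstacle here: the only point requiring a little care is the bookkeeping of signs and the identification $\bigcap_{j\in J}(A_j\cap A_k)=A_{J\cup\{k\}}$ when passing to the subfamily inside $A_k$, together with the separate treatment of the empty index set in that subfamily.
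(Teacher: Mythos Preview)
Your proof is correct. Note, however, that the paper does not actually supply a proof of this lemma: it is stated as the standard inclusion--exclusion principle (``the following sieve result which generalises the sieve of Eratosthenes'') and then used without further justification. Your inductive argument is a perfectly good way to fill that gap.
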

To prove Theorem \ref{main} let $X$ be the set
$$X=\{(a_1,\ldots,a_v) \in \mathbb{N}^v: a_r\le x, 1\le r\le v\}.$$
Our set $G(x)$, associated to the graph $G$, is a subset of $X$. Now for each prime $p\le x$ and each edge $a=\{r,s\} \in G$ define the following subset of X.
$$A_{p,a}=\{(a_1,\ldots,a_v) \in X : p|a_r,p|a_s\}.$$
Therefore the tuples in $A_{p,a}$ are not in $G(x)$. In fact it is clear that
$$G(x)=X\backslash \bigcup_{\substack{a\in E\\p \le x}}A_{p,a},$$
where $E$ denotes the set of edges in our graph $G$.
We note that we have an $A_{p,a}$ for each prime number
less than or equal to $x$ and each edge $a \in E$. Denoting $P_x$ as the set of prime numbers less than or equal to $x$ we can represent each $A_{p,a}$ as $A_j$ with $j \in P_x \times E$.
We now apply Lemma \ref{sieve} and obtain
\begin{align}\label{cardgx}
g(x)=\sum_{J \subset P_x \times E} (-1)^{\card(J)}\card(A_J).
\end{align}

We compute $\card(A_J)$ and then $\card(J)$. For $\card(A_J)$ we have
$$J=\{(p_1,e_1),\ldots,(p_m,e_m)\},\quad A_J=\bigcap_{j=1}^m A_{p_j,e_j}.$$
Therefore $(a_1,\ldots,a_v) \in A_J$ is equivalent to saying that $p_j|a_{r_j},p_j|a_{s_j}$ for all $1 \le j \le m$, where $e_j=\{r_j,s_j\}$.
We note that if $p_{i_1},\ldots, p_{i_\ell}$ are the primes associated in $J$ with a given
edge $a=\{r,s\}$, then the product of $p_{i_1} \cdots p_{i_\ell}$ must also divide
the values $a_r$ and $a_s$ associated to the vertices of $a$.  Let $T_a \subset P_x$ consist of the primes $p$ such that $(p,a) \in J$. In addition we define
$$n_a=\prod_{p \in T_{a}}p, $$
observing that when $T_a=\emptyset$ we have $n_a=1$.
Then $(a_1,\ldots,a_v) \in A_J$ is equivalent to saying that for each $a=\{r,s\}$ appearing in $J$
we have $n_a\mid a_r$ and $n_a\mid a_s$.
In this way we can define $J$ by giving a number $n_a$ for each edge $a$. We note that $n_a$ will always be squarefree,  and all its prime factors will be less than or equal to $x$. We also note that $(a_1,\ldots,a_v) \in A_J$ is equivalent to saying that $n_a|a_r$ for each edge $a$ that joins vertex $r$ with another vertex.

Then for each vertex $r$, consider all the edges $a$ joining $r$ to other vertices, and denote the least common multiple of the corresponding $n_a$'s by $N_{r}$. So $(a_1,\ldots,a_v) \in A_J$ is equivalent to saying that $N_{r}|a_r$. The number of multiples of $N_{r}$ that are less than or equal to $x$ is $\lfloor x/N_{r} \rfloor$, so we can express the number of elements of $A_J$ as
\begin{align}\label{cardaj}
\card(A_J)=\prod_{r=1}^v \bigg\lfloor\frac{x}{N_{r}}\bigg\rfloor.
\end{align}

We now compute $\card(J)$. This is the total number of prime factors across all the $n_j$. As mentioned before $n_j$ is squarefree, so
\begin{align}\label{cardj}
(-1)^{\card(J)}=(-1)^{\sum_{\substack{j=1}}^e\omega(n_j)}=\mu(n_1)\cdots\mu(n_e),
\end{align}
where the summations are over all squarefree $n_j$ with $P^+(n_j) \le x$.
Substituting \eqref{cardaj} and \eqref{cardj} into \eqref{cardgx} yields
\begin{align*}
g(x)=\sum_{n_1=1}^\infty\cdots \sum_{n_e=1}^\infty\mu(n_1)\cdots\mu(n_e)\prod_{r=1}^v \bigg\lfloor\frac{x}{N_{r}}\bigg\rfloor.
\end{align*}
At first the sum extends to the $(n_1,\dots, n_e)$ that are squarefree and have
all prime factors less than or equal to $x$. But we may extend the sum to all $(n_1,\dots, n_e)$,
because if these conditions are not satisfied then the corresponding term is automatically $0$.
In fact we may restrict the summation to the $n_a\le x$, because otherwise
for $a=\{r,s\}$ we have $n_a\mid N_{r}$ and $\lfloor x/N_{r}\rfloor=0$. Therefore
\begin{align*}
g(x)=\sum_{1 \le n_1 \le x}\cdots \sum_{1 \le n_e\le x}\mu(n_1)\cdots\mu(n_e)\prod_{r=1}^v \bigg\lfloor\frac{x}{N_{r}}\bigg\rfloor.
\end{align*}
We now seek to express $g(x)$ as a multiple of $x^v$ plus a suitable error term.
Observe that for all real $z_1,z_2,z_3>0$,
$$\lfloor z_1\rfloor\lfloor z_2\rfloor\lfloor z_3\rfloor=
z_1z_2z_3-z_1z_2\{z_3\}-z_1\{z_2\}\lfloor z_3\rfloor-\{z_1\}\lfloor z_2\rfloor\lfloor z_3\rfloor,$$
where $\{y\}$ denotes the fractional part of a number $y$.

Applying a similar procedure, with $v$ factors instead of $3$, we get
\begin{align}\label{mainterm and error}
g(x)&=\sum_{1 \le n_1 \le x}\cdots \sum_{1 \le n_e\le x}\mu(n_1)\cdots\mu(n_e)\prod_{r=1}^v \frac{x}{N_{r}}\notag\\
&-\sum_{1 \le n_1 \le x}\cdots \sum_{1 \le n_e\le x}\mu(n_1)\cdots\mu(n_e)\bigg\{\frac{x}{N_{1}}\bigg\}\prod_{r=2}^v \bigg\lfloor\frac{x}{N_{r}}\bigg\rfloor\notag\\
&-\sum_{1 \le n_1 \le x}\cdots \sum_{1 \le n_e\le x}\mu(n_1)\cdots\mu(n_e)\frac{x}{N_{1}}\bigg\{\frac{x}{N_{2}}\bigg\}\prod_{r=3}^v \bigg\lfloor\frac{x}{N_{r}}\bigg\rfloor\notag\\
&\cdots\notag\\
&-\sum_{1 \le n_1 \le x}\cdots \sum_{1 \le n_e\le x}\mu(n_1)\cdots\mu(n_e)\frac{x}{N_{1}}\cdots \frac{x}{N_{v-1}}\bigg\{\frac{x}{N_v}\bigg\}\notag\\
&=x^v\sum_{1 \le n_1 \le x}\cdots \sum_{1 \le n_e\le x}\mu(n_1)\cdots\mu(n_e)\prod_{r=1}^v \frac{1}{N_{r}}+
\sum_{k=1}^vR_k,
\end{align}
where for $1 \le k \le v$,
\begin{align*}
R_k&=-\sum_{1 \le n_1 \le x}\cdots \sum_{1 \le n_e\le x}\mu(n_1)\cdots\mu(n_e)\frac{x}{N_{1}}\cdots\frac{x}{N_{k-1}} \bigg\{\frac{x}{N_{k}}\bigg\}\bigg\lfloor\frac{x}{N_{k+1}}\bigg\rfloor\cdots\bigg\lfloor\frac{x}{N_{{v}}}\bigg\rfloor,
\end{align*}
with the obvious modifications for $j=1$ and $j=v$.
We then have
\begin{align*}
|R_k|&\le \sum_{1 \le n_1 \le x}\cdots \sum_{1 \le n_e\le x}|\mu(n_1)\cdots\mu(n_e)|\frac{x}{N_{1}}\cdots\frac{x}{N_{k-1}} \frac{x}{N_{k+1}}\cdots\frac{x}{N_{{v}}}\\
&\le x^{v-1}\sum_{P^+(m)\le x} \frac{C_{G,k}(m)}{m},
\end{align*}
where
$$C_{G,k}(m)=\sum_{m=\prod_{1 \le r \le v, r \ne k}N_{r}}|\mu(n_1)\cdots\mu(n_e)|.$$
By similar reasoning to that of Proposition \ref{multi} the function
$C_{G,k}(m)$ can be shown to be multiplicative.
The numbers $C_{G,k}(p^{\alpha})$
do not depend on $p$, and $C_{G,k}(p^{\alpha})=0$
for $\alpha>v$. So we have
\begin{align*}
\sum_{P^+(m)\le x} \frac{C_{G,k}(m)}{m}&\le\prod_{p \le x}\(1+\frac{C_{G,k}(p)}{p}
+\frac{C_{G,k}(p^2)}{p^2}+\cdots\frac{C_{G,k}(p^v)}{p^v}\)\\
&=O(\log^{C_{G,k}(p)}x),
\end{align*}
where $C_{G,k}(m)$ is the number of solutions $(n_1,\dots, n_e)$, with $n_j$ squarefree,  to
\begin{align}
\prod_{1 \le r \le v, r\ne k}N_{r}=m.
\end{align}
Let $h_k$ denote the degree of
vertex $k$. It is easy to see that for a prime $p$ we have $C_{G,k}(p)=h_k$.  The solutions are precisely those with all $n_j=1$, except one  $n_\ell=p$, where
$\ell$ should be one of the edges meeting at vertex $k$.
Therefore the maximum number of solutions occurs when $k$ is one of the vertices of maximum degree. So if we let $d$ be this maximum degree, then the maximum value of $C_{G,k}(p)$ is $d$. Therefore
\begin{align}\label{firsterror}
|R_k| = O(x^{v-1}\log^{d}x).
\end{align}
Substituting \eqref{firsterror} into \eqref{mainterm and error} we obtain
\begin{align}\label{afterfirsterror}
g(x)=x^v\sum_{1 \le n_1 \le x}\cdots \sum_{1 \le n_e\le x} \mu(n_1)\cdots\mu(n_e)\prod_{r=1}^v \frac{1}{N_{r}}+O(x^{v-1}\log^{d}x).
\end{align}
We require the following lemma.
\begin{lem}\label{absolute}
$$\lim_{x \rightarrow \infty}\sum_{1 \le n_1 \le x}\cdots \sum_{1 \le n_e\le x} |\mu(n_1)\cdots\mu(n_e)|\prod_{r=1}^v \frac{1}{N_{r}}<+\infty.$$
\end{lem}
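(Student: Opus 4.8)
The plan is to reduce the multi-variable sum to an Euler product and show that product converges absolutely. First I would invoke Proposition~\ref{multi}: taking $f = |\mu|$, the function $g_{|\mu|,G}(m) = \sum_{N_1\cdots N_v = m} |\mu(n_1)\cdots\mu(n_e)| = f_G^+(m)$ is multiplicative. Hence the sum in question is, after letting $x \to \infty$ and grouping the terms according to the value $m = N_1\cdots N_v$,
\begin{align*}
\sum_{m=1}^\infty \frac{f_G^+(m)}{m},
\end{align*}
and multiplicativity of $f_G^+$ together with nonnegativity of its values lets me write this (formally) as the Euler product $\prod_p \sum_{k\ge 0} f_G^+(p^k)/p^k$. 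The key arithmetic input is Lemma~\ref{L:functions}: $f_G^+(p^k)$ is the coefficient of $x^k$ in $Q_G^+(x)$, which is a fixed polynomial independent of $p$, of degree at most $v$, with nonnegative integer coefficients. So each local factor equals $Q_G^+(1/p)$, and I must show $\prod_p Q_G^+(1/p) < \infty$.

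The convergence is then routine. Since $Q_G^+(x) = 1 + e x^2 + (\text{higher order terms with nonnegative coefficients})$, for $p \ge 2$ we have $Q_G^+(1/p) = 1 + O(1/p^2)$, with the implied constant depending only on $G$ (one may bound all coefficients of $Q_G^+$ by $2^e$, say, and sum the geometric-type tail $\sum_{k\ge 2} 2^e/p^k \ll 1/p^2$). Since $\sum_p 1/p^2 < \infty$, the product $\prod_p (1 + O(1/p^2))$ converges to a finite positive limit. Finally, to justify the interchange of limit and summation that turns the original truncated sum into $\sum_m f_G^+(m)/m$, I would note that every term $|\mu(n_1)\cdots\mu(n_e)|\prod_r 1/N_r$ is nonnegative, so the truncated sums are monotone nondecreasing in $x$ and bounded above by $\prod_p Q_G^+(1/p)$ (each truncated sum is dominated by a partial product); monotone convergence then gives the claim.

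The only point requiring a little care — the "main obstacle", though it is mild — is the bookkeeping that identifies the truncated double sum $\sum_{1\le n_1\le x}\cdots\sum_{1\le n_e\le x}$ with a partial sum of $\sum_m f_G^+(m)/m$ and hence with a quantity bounded by a partial Euler product. The subtlety is that the constraint is on each $n_j$ individually, not on $m = N_1\cdots N_v$; but since $n_a \mid N_r$ whenever $a$ is incident to $r$, any edge numbering contributing with $m \le x$ automatically has all $n_a \le x$, so restricting the $n_j$ to $[1,x]$ only adds nonnegative terms beyond those with $m \le x$ and omits none with $m \le x$. Combined with the fact that the full (untruncated) sum of nonnegative terms equals $\sum_m f_G^+(m)/m = \prod_p Q_G^+(1/p) < \infty$, this shows the truncated sums are bounded and increasing, and the limit exists and is finite, as required.
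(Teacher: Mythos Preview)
Your proposal is correct and follows essentially the same route as the paper: rewrite the sum as $\sum_m f_G^+(m)/m$, use Proposition~\ref{multi} for multiplicativity, and invoke Lemma~\ref{L:functions} to identify the local factor at $p$ with $Q_G^+(1/p)$, whose lack of a linear term gives a convergent Euler product. Your treatment is in fact slightly more careful than the paper's in justifying the passage from the truncated sum over $n_j\le x$ to $\sum_m f_G^+(m)/m$ via monotonicity and the observation $n_a\mid N_r\mid m$.
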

\begin{proof}
We have
\begin{align}\label{sumb1}
\lim_{x \rightarrow \infty}\sum_{1 \le n_1 \le x}\cdots \sum_{1 \le n_e\le x} |\mu(n_1)\cdots \mu(n_e)|\prod_{r=1}^v \frac{1}{N_{r}}
&=\sum_{m=1}^\infty \frac{f^+_G(m)}{m},
\end{align}
where
$$f^+_G(m)=\sum_{m=\prod_{r=1}^vN_{r}}|\mu(n_1)\cdots \mu(n_e)|.$$
We note that $f^+_G(m)$ is multiplicative by Proposition \ref{multi}.
It is clear that $f_G^+(1)=1$. Also, each edge joins two vertices $r$ and $s$ and thus $n_j|E_r$ and $n_j|E_s$. This means that
$$n_j^2 \big|\prod_{r=1}^v N_{r}.$$ It follows that
$$\prod_{r=1}^v N_{r}\ne p,$$ for any prime $p$ and so $f^+_G(p)=0.$
We also note that a multiple $(n_1,\ldots,n_e)$ only counts in $f^+_G(m)$ if $|\mu(n_1)\cdots \mu(n_e)|=1$. Therefore each $n_j$ is squarefree. So each factor in
\begin{align}\label{prod}
\prod_{r=1}^v N_{r}
\end{align}
brings at most a $p$. So the greatest power of $p$ that can divide \eqref{prod} is $p^v$. So $f^+_G(p^{\alpha})=0$ for $\alpha>v$.
Recall that $f^+_G(p^\alpha)$ is equal to the coefficient of $x^\alpha$ in  $Q_G^+(x)$. So, by Lemma \ref{L:functions}, we note that  $f^+_G(p^\alpha)$ depends on $\alpha$ but not on $p$.
Putting all this together we have
\begin{align}\label{sumb2}
\sum_{m=1}^\infty \frac{f^+_G(m)}{m}=\prod_{p~\textrm{prime}}\(1+\frac{f^+_G(p^2)}{p^2}+\ldots +\frac{f^+_G(p^v)}{p^v}\)< +\infty.
\end{align}
Substituting \eqref{sumb2} into \eqref{sumb1} completes the proof.
\end{proof}
Returning to \eqref{afterfirsterror} it is now clear from Lemma \ref{absolute} that
$$\rho_G=\lim_{x \rightarrow \infty}\sum_{1 \le n_1 \le x}\cdots \sum_{1 \le n_e\le x} \mu(n_1)\cdots\mu(n_e)\prod_{r=1}^v \frac{1}{N_{r}}$$ is absolutely convergent.
In fact,
\begin{align}\label{gx}
g(x)=x^v \rho_G + R + O(x^{v-1} \log^d x),
\end{align}
where
$$\rho_G=\sum_{n_1=1 }^\infty\cdots \sum_{n_e=1}^\infty \mu(n_1)\cdots\mu(n_e)\prod_{r=1}^v \frac{1}{N_{r}},$$
and
$$|R| \le x^{v-1} \sum_{j=1}^e \sum_{n_1=1}^\infty\cdots\sum_{n_{j-1}=1}^\infty\sum_{n_j>x}\sum_{n_{j+1}=1}^\infty\cdots \sum_{n_e=1}^\infty|\mu(n_1)\cdots \mu(n_e)|\prod_{r=1}^v \frac{1}{N_{r}}.$$
Now
$$\rho_G=\sum_{m=1}^\infty \frac{1}{m}\sum_{N_1\cdots N_v=m}\mu(n_1)\cdots \mu(n_e)=\sum_{m=1}^\infty \frac{f_G(m)}{m}.$$
We note that $f_G(m)$ is multiplicative by Proposition \ref{multi}. In a similar way to Lemma \ref{absolute} we have $f_G(1)=1, f_G(p)=0$ and $f_G(p^{\alpha})=0$, for all $\alpha>v$. Thus,
by the multiplicativity,
\begin{align*}
\rho_G=\sum_{m=1}^\infty \frac{f_G(m)}{m}=\prod_{p~\textrm{prime}}\(1+\frac{f_G(p^2)}{p^2}+\ldots+\frac{f_G(p^v)}{p^v}\),
\end{align*}
Therefore, by Lemma \ref{L:functions}, we have
\begin{align}\label{rhog}
\rho_G=\prod_{p~\textrm{prime}}Q_G\(\frac{1}{p}\).
\end{align}
Substituting \eqref{rhog} into \eqref{gx}, it only remains to show that $|R|=O(x^{v-1}\log^d x)$.

We have
\begin{align*}
|R| &\le x^{v-1} \sum_{j=1}^e \sum_{n_1=1}^\infty\cdots\sum_{n_{j-1}=1}^\infty\sum_{n_j>x}\sum_{n_{j+1}=1}^\infty\cdots \sum_{n_e=1}^\infty|\mu(n_1)\cdots \mu(n_e)|\prod_{r=1}^v \frac{1}{N_{r}}.
\end{align*}
All terms in the sum on $j$ are analogous; so assuming that the
first is the largest, we have
\begin{align*}
|R|&\le C_1x^{v-1}\sum_{n_1>x}\sum_{n_2=1}^\infty\sum_{n_{j+1}=1}^\infty\cdots \sum_{n_e=1}^\infty|\mu(n_1)\cdots \mu(n_e)|\prod_{r=1}^v \frac{1}{N_{r}},
\end{align*}
where
$C_1$ is a function of $e$ and not $x$. So it will suffice to show that
\begin{align}\label{R1}
R_1:=\sum_{n_1>x}\sum_{n_2=1}^\infty\cdots \sum_{n_e=1}^\infty|\mu(n_1)\cdots \mu(n_e)|\prod_{r=1}^v \frac{1}{N_{r}}=O(\log^d x).
\end{align}
We will treat an edge $e_1=\{r,s\}$ differently to the other edges.
For a given $(n_1,\ldots,n_e)$ of squarefree numbers  we have two special $N_r$,
$$N_r=[n_1,n_{\alpha_1},\ldots n_{\alpha_k}], \quad  N_s=[n_1,n_{\beta_1},\ldots n_{\beta_k}].$$
We also remark that we may have $N_r=[n_1]$ or $N_s=[n_1]$.

For any edge $e_j$ with $2 \le j \le e$ we define $d_{j}=\gcd(n_1,n_{j})$.
Since the $n_j$ are squarefree, we have
$$n_{j}=d_{j}n'_{j},\quad d_{j}|n_1, \quad \gcd(n_1,n'_{j})=1.$$
Then it is clear that
$$N_{r}=[n_1,d_{\alpha_1}n'_{\alpha_1},\ldots,d_{\alpha_k}n'_{\alpha_k}]=n_1[n'_{\alpha_1},\ldots,n'_{\alpha_k}],\quad N_{s}=n_1[n'_{\beta_1},\ldots,n'_{\beta_l}].$$
For any other vertex with $t \ne r$ and  $t\ne s$, we have
$$N_{t}=[n_{t_1},\ldots,n_{t_m}]=[d_{t_1}n'_{t_1},\ldots,d_{t_m}n'_{t_m}]=[d_{t_1},\ldots,d_{t_m}][n'_{t_1},\ldots,n'_{t_m}],$$
where $m$ will vary with $t$.
Substituting the equations for $N_{r}, N_{s}$ and $N_{t}$ into the definition of $R_1$ in \eqref{R1} we obtain
\begin{align*}
R_1&=\sum_{n_1>x}\sum_{n_2=1}^\infty\cdots \sum_{n_e=1}^\infty|\mu(n_1)\cdots \mu(n_e)|\frac{1}{N_r}\frac{1}{N_s}\prod_{\substack{1 \le t \le v\\t\ne r,~t \ne s}} \frac{1}{N_{t}}\\
&=\sum_{n_1>x}\frac{|\mu(n_1)|}{n_1^2}\sum_{d_2|n_1}\cdots \sum_{d_e|n_1}\sum_{n'_2=1}^\infty\cdots \sum_{n'_e=1}^\infty\frac{|\mu(n_2)\cdots \mu(n_e)|}{[n'_{\alpha_1},\ldots,n'_{\alpha_k}][n'_{\beta_1},\ldots,n'_{\beta_l}]}\\&\quad\quad\quad\quad \times\prod_{\substack{1 \le t \le v\\t\ne r,~t \ne s}} \frac{1}{[d_{t_1},\ldots,d_{t_m}][n'_{t_1},\ldots,n'_{t_m}]}\\
&=\sum_{n_1>x}\frac{|\mu(n_1)|}{n_1^2}\sum_{d_2|n_1}\cdots \sum_{d_e|n_1}
\prod_{\substack{1 \le t \le v\\t\ne r,~t \ne s}} \frac{1}{[d_{t_1},\ldots,d_{t_m}]}\\&\quad\quad\quad\quad\times    \sum_{n'_2=1}^\infty\cdots \sum_{n'_e=1}^\infty\frac{|\mu(d_2n'_2)\cdots \mu(d_en'_e)|}{[n'_{\alpha_1},\ldots,n'_{\alpha_k}][n'_{\beta_1},\ldots,n'_{\beta_l}]} \prod_{\substack{1 \le t \le v\\t\ne r,~t \ne s}} \frac{1}{[n'_{t_1},\ldots,n'_{t_m}]}\\
&\le\sum_{n_1>x}\frac{|\mu(n_1)|}{n_1^2}\sum_{d_2|n_1}\cdots \sum_{d_e|n_1}
|\mu(d_2)\cdots\mu(d_e)|
\prod_{\substack{1 \le t \le v\\t\ne r,~t \ne s}}
\frac{1}{[d_{t_1},\ldots,d_{t_m}]}\\&\quad\quad\quad\quad\times    \sum_{n'_2=1}^\infty\cdots \sum_{n'_e=1}^\infty\frac{|\mu(n'_2)\cdots \mu(n'_e)|}{[n'_{\alpha_1},\ldots,n'_{\alpha_k}][n'_{\beta_1},\ldots,n'_{\beta_l}]} \prod_{\substack{1 \le t \le v\\t\ne r,~t \ne s}} \frac{1}{[n'_{t_1},\ldots,n'_{t_m}]}.
\end{align*}
The product
$$\sum_{n'_2=1}^\infty\cdots \sum_{n'_e=1}^\infty\frac{|\mu(n'_2)\cdots \mu(n'_e)|}{[n'_{\alpha_1},\ldots,n'_{\alpha_k}][n'_{\beta_1},\ldots,n'_{\beta_l}]} \prod_{\substack{1 \le t \le v\\t\ne r,~t \ne s}} \frac{1}{[n'_{t_1},\ldots,n'_{t_m}]}$$ is finite
by Lemma \ref{absolute} (but this time considering the graph $G$ without the edge $\{r,s\}$). Thus, for some constant $C_1$, we have
\begin{align}\label{fge}
R_1 &\le C_2 \sum_{n_1>x}\frac{|\mu(n_1)|}{n_1^2}\sum_{d_2|n_1}\cdots \sum_{d_e|n_1}|\mu(d_2)\cdots\mu(d_e)|\prod_{\substack{1 \le t \le v\\t\ne r,~t \ne s}}
\frac{1}{[d_{t_1},\ldots,d_{t_m}]}\notag\\
&=C_2 \sum_{n_1>x}\frac{|\mu(n_1)|}{n_1^2}f_{G,e}(n_1),
\end{align}
where the arithmetic function $f_{G,e}$ is defined as follows.
\begin{align*}
f_{G,e}(n)=\sum_{d_2|n}\cdots \sum_{d_e|n}|\mu(d_2)\cdots\mu(d_e)|
\prod_{\substack{1 \le t \le v\\t\ne r,~t \ne s}} \frac{1}{[d_{t_1},\ldots,d_{t_m}]}.
\end{align*}
We note that there is a factor $[d_{t_1},\ldots,d_{t_m}]$ for each vertex other than $r$ or $s$. The function $f_{G,e}$ is a multiplicative function.
We have $f_{G,e}(p^k)=f_{G,e}(p)$ for any power of a prime $p$ with $k\ge2$, because
in the definition of $f_{G,e}(p^k)$ only the divisors $1$ and $p$ of $p^k$ give
non null terms.
When $n=p$ we have
$$f_{G,e}(p)=1+\frac{A_1}{p}+\cdots +\frac{A_{v-2}}{p^{v-2}},$$
where $A_i$ is the number of ways that
$$\prod_{\substack{1 \le t \le v\\t\ne r,~t \ne s}}|\mu(d_2)\cdots\mu(d_e)|[d_{t_1},\dots, d_{t_m}]=p^i,$$ where every divisor in the product $d_h\mid n=p$ can only be $1$ or  $p$.
Clearly $A_i \le 2^{e-1}$ do not depend on $p$, and so there must be a number $w$,
independent of $p$,  such that
$$f_{G,e}(p^k)=f_{G,e}(p) \le \(1+\frac{1}{p}\)^w.$$ Since $f_{G,e}$
is multiplicative we have,
for any squarefree $n$,
\begin{align}\label{sigma}
f_{G,e}(n)\le \prod_{p|n}\(1+\frac{1}{p}\)^w=\(\frac{\sigma(n)}{n}\)^w, \quad |\mu(n)|=1.
\end{align}
Substituting \eqref{sigma} into \eqref{fge} yields
$$R_1 \le C_2 \sum_{n>x} ^\infty \frac{|\mu(n)|}{n^2}\(\frac{\sigma(n)}{n}\)^w \le C_2 \sum_{n>x} ^\infty \frac{1}{n^2}\(\frac{\sigma(n)}{n}\)^w.$$
It is well known that $\sigma(n)=O(n\log\log n)$ (see, for example, \cite{Gro}),
and thus
\begin{align}\label{loglog}
R_1=O\(\frac{(\log \log x)^w}{x}\).
\end{align}
Comparing \eqref{loglog} with \eqref{R1} completes the proof of Theorem \ref{main}.

\section{Acknowledgement}
The second author would like to thank the first author and his colleagues for their hospitality during a pleasant visit to Seville.

\end{document}